\newtheorem{theorem}{Theorem}[section]
\newtheorem{mtheorem}{Main Theorem}
\newtheorem{defin}{Definition}[section]
\newtheorem{lemma}{Lemma}[section]
\newtheorem{remark}{Remark}
\newtheorem{ex*}{Example}
\newcommand{\real}{\mathbb{R}}
\newcommand{\comp}{\mathbb{C}}
\newcommand{\realp}{\mathbb{R}_{+}}
\newcommand{\inte}{\mathbb{Z}}
\newcommand{\dd}[1]{\frac{\partial}{\partial #1}}
\newcommand{\half}{\frac{1}{2}}
\newcommand{\torus}{\mathbb{T}}
\newcommand{\ctorus}{(\comp^{*})}
\begin{document}
\title{Lattice points counting via Einstein metrics}
\author{Naichung Conan Leung and Ziming Nikolas Ma}
\address{The Institute of Mathematical Sciences and Department of Mathematics, The Chinese University of Hong Kong, Shatin, Hong Kong}
\email{leung@math.cuhk.edu.hk}
\email{ncma@math.cuhk.edu.hk}
\date{}
\maketitle
% !TEX root = Index page.tex

\begin{center}\textbf{Abstract}
\end{center}

We obtain a growth estimate for the number of lattice points inside any $\mathbb{Q}$-Gorenstein cone. Our proof uses the result of Futaki-Ono-Wang on Sasaki-Einstein metric for the toric Sasakian manifold associated to the cone, a Yau's inequality, and the Kawasaki-Riemann-Roch formula for orbifolds.
% !TEX root = Index page.tex
\section{Introduction}
The Ehrhart polynomial $p_{P} : \mathbb{Z} \rightarrow \mathbb{Z}$ associated to a lattice polytope $P$ inside an $n$-dimensional latticed vector space $\mathbb{Z}^n \subset \real^n$ is given by 
$$p_P(k)= \#(k P \cap \mathbb{Z}^n)=\sum_{i=0}^n a_i k^i.$$ Lots of work has been done to get estimates of the polynomial, using either combinatorial or geometric methods (see for example \cite{CtCD}). In this paper, we are interested in obtaining a lower estimate of $p_P(k)$ for large $k$ using toric geometry and Einstein metrics.\\

When the polytope is Delz\'ant, the Ehrhart polynomial has an expression via toric geometry, by associating to $P$ a toric manifold $X_P$ and using the Riemann-Roch formula. It is well known that the leading coefficient $a_n$ is $Vol(P)$ and $a_{n-1}$ is determined by $Vol(P)$ if $P$ is reflexive. A lower estimate is obtained by considering $a_{n-2}$, which is an integral of the second Chern class of $X_P$, besides terms involving volume. When the polytope is balanced (i.e. the center of mass agrees with the origin), we can obtain an estimate $$a_{n-2} \geq \frac{(3n+2)(n-1)n}{24(n+1)}  Vol(P),$$ using the existence of the K\"ahler-Einstein metric (see \cite{Yau78} and \cite{WZ04}) and a Yau's inequality (see appendix \ref{pointchernnoieq}).\\

In this paper, we want to generalize this result to reflexive polytopes which may not be balanced. Given any polytope $P$, we can form its cone $C^\lor(P)=cone(P\times \{1\})\subset \real^{n}\times \real$ and let $\xi = (\vec{0},1)$. We count the number of lattice points up to level $k$ defined by $\xi$, that is, $n_\xi(k)= \# \{ x \in C^\lor(P) \cap \mathbb{Z}^{n+1} | (x,\xi) \leq k \}$. The two counting functions are related by $$n_\xi(k)= \sum_{i=0}^{k} p_P(i).$$ We will reformulate the counting problem by considering $n_\xi(k)$.\\

From now on, instead of using the standard lattice $\mathbb{Z}^n \times \mathbb{Z}$, we let $N$ be any rank $n+1$ lattice and $M$ be its dual. Let $C^{\lor} \subset M_{\real}$ be a cone. We can choose an affine hyperplane $H_\xi=\{x\in M_{\real} | ( x,\xi) = 1\}$ by picking a dual vector $\xi \in N_\real$. The hyperplane is moved toward infinity by changing $\xi$ to  $\xi/k$ and letting $k \rightarrow \infty$. We consider the function 
$$\begin{array}{rl}
n_\xi(k) &= \# \{x \in C^\lor \cap M | (x,\xi) \leq k\}\\
\\
& = b_{n+1} k^{n+1} + b_n k^n + b_{n-1}k^{n-1} + \mathcal{O}(k^{n-2})
\end{array}$$ 
which counts the number of lattice points inside the cone $C^\lor$ below the affine hyperplane $H_{\xi/k}$ (see \S 2, Figure \ref{fig:conefigure}). Similar to the polytope case, the first two coefficients, $b_{n+1}$ and $b_n$, are related to the volume of a certain polytope $\Delta_\xi$ determined by the $\xi$. Therefore, we focus on the first non-trivial coefficient $b_{n-1}$.\\

The advantage of this formulation is that we have the freedom to rotate the hyperplane by changing $\xi$. If the cone is $\mathbb{Q}$-Gorenstein (see Definition \ref{defCYcone}), we always have a balancing direction $\xi_c$ (see appendix \ref{thmxic}), which satisfies $$n_\xi(k) \geq n_{\xi_c}(k)\;\;for\;\;k\gg0,$$ for any other normalized vector $\xi$ in the interior of the cone $C$. $\xi_c$ will play the role of a balancing direction for the cone $C^\lor$. For $\xi$ close enough to $\xi_c$, the coefficient $b_{n-1}$ of $n_{\xi}(k)$ will have a lower estimate.\\

In the case that $\xi_c$ is a rational vector and the polytope $\{x \in C^\lor |(x,\xi_c)=1\}$ is Delz\'ant, our result gives
$$\begin{array}{rl}
 \frac{n_\xi(k)}{Vol_{n+1}(\Delta_\xi)} \geq&   k^{n+1}+\frac{(n+1)(n+2)}{2} k^n + \frac{n(n+1)(n+2)(3n+5)}{24}k^{n-1} + \mathcal{O}(k^{n-2}).
 \end{array}$$
In general, we have the following main theorem:
\begin{mtheorem}
Given an $(n+1)$-dimensional $\mathbb{Q}$-Gorenstein cone $C^\lor \subset M_\real^{n+1}$,
 with its canonical Reeb vector $\xi_c \in C \subset N_\real$,
 if $\xi_c$ is rational, let $\xi \in N$ be a primitive vector parallel to it; otherwise, choose $\xi$ having its direction close enough to $\xi_c$. Then\\
$$\begin{array}{rl}
 b_{n+1} = &\displaystyle Vol_{n+1}(\Delta_\xi)\\
 \\
 b_n =&\displaystyle \frac{1+q}{2}(n+1) Vol_{n+1}(\Delta_\xi)\\
 \\
 b_{n-1} \geq &\displaystyle c_{q,n} Vol_{n+1}(\Delta_\xi)\\
 \\
 &\displaystyle + \sum_{\rho \in C(1)} c_{\rho,n}Vol_{n}(H_\rho)

\end{array}$$

Here $\Delta_\xi=\{(\xi,y) \leq 1\} \subset M_\real$ is the polytope cut out by $\xi$ and $H_\rho = \{(\xi,y) \leq 1\}\;\cap\; \rho^{\perp}\cap C^\lor \subset \Delta_\xi$ is the corresponding facet associated to each ray $\rho \in C(1)$. $Vol_n$ refers to the $n$-dimensional volume of subspaces in $M_\real$.
\end{mtheorem}

\vspace{0.5cm}
\begin{remark}
 $q$ is defined in Definition \ref{defxi}. The constants $c_{q,n}$ and $c_{\rho,n}$ are given in \S3 Theorem \ref{theorem}. 
\end{remark}

\begin{remark}
The sum $\sum_{\rho \in C(1)}c_{\rho,n}Vol_{n}(H_\rho)$ in the expansion, with $Vol_n(H_\rho)$ being the volume of various facets, comes from the orbifold structures of related toric spaces.\\
\end{remark}
\begin{remark}
The above result can be considered purely as a problem concerning the cone $C^\lor$: When the direction is close enough to the canonical one minimizing the volume, we can have an estimate of the first nontrivial term in terms of the volume. 
\end{remark}
\begin{remark}
 In \cite{KL07}, Chan and the first author have studied a family of Yau's inequalities on Fano toric manifolds and their implications in the lattice points counting problem.
\end{remark}

We give the proof of the theorem in \S 2, omitting the computations that arise from the presence of  orbifold singularities. The orbifold computations will be handled in \S 3.
% !TEX root = Index page.tex
\section{Proof of theorem}
Before giving the proof of our main theorem, we give a proof of the statement $a_{n-2} \geq \frac{(3n+2)(n-1)n}{24(n+1)} Vol(P)$ for the \textit{balanced} reflexive Delz\'ant polytope $P$ mentioned in the introduction. Recall (see e.g. \cite{TGB}) that $P$ determines a Fano toric manifold $X=X_P$ and $p_P(k)=dim_\comp \Gamma(X,(K_X^{-1})^{\otimes k})=\chi(X,(K_X^{-1})^{\otimes k})$. Using the Riemann-Roch formula, we have 
$$
\begin{array}{rl}
p_P(k)=& \int_X ch((K_X^{-1})^{\otimes k}) Td(X)\\
=&(\int_{X} c_1^n)\frac{k^n}{n!} + (\half \int_{X}c_1^n)\frac{k^{n-1}}{(n-1)!}+\frac{1}{12}\lbrack \int_{X} c_1^n + \int_{X} c_1^{n-2}c_2 \rbrack \frac{k^{n-2}}{(n-2)!}\\
&+\mathcal{O}(k^{n-3}),
\end{array}
$$ where $c_i=c_i(X)$ is the $i$th Chern class of $X$. Since $P$ is balanced, $X$ has a K\"ahler-Einstein metric by the result of Wang-Zhu in \cite{WZ04}. Then we can use the Yau's inequality, $$\int_X c_2c_1^{n-2} \geq \frac{n}{2n+2} \int_X c_1^n$$(see appendix \ref{pointchernnoieq}), and $$\int_X c_1^n=n! Vol(P)$$  to get estimate $$a_{n-2} \geq \frac{(3n+2)(n-1)n}{24(n+1)} Vol(P).$$\\

\begin{remark}
The Yau's inequality and its consequences for algebraic geometry was studied by S.-T. Yau in \cite{Yau77}, as a consequence of the existence of K\"ahler-Einstein metrics. The existence of such metrics was proved for the negative first Chern class case independently by T. Aubin in \cite{Aub} and S.-T. Yau in \cite{Yau78}. The  zero first Chern class case was proven by S.-T. Yau in \cite{Yau78}.
\end{remark}

The proof given below is in a similar flavour. We have to construct some spaces and a line bundle that count the number of lattice points. Extra difficulties arise from the orbifold structure.\\

As mentioned in the introduction, we consider the counting problem for a cone $C^\lor$. Let $C^{\lor} \subset M_\real$ be a cone; we choose an affine hyperplane $H_\xi=\{x \in M_\real|(x,\xi) = 1\}$ by choosing a $\xi \in C \subset N_\real$ such that it cuts the cone cleanly. Move the hyperplane toward infinity by changing $\xi$ to $\xi/k$ and count the number of lattice points bounded below the hyperplane. We want to study the effect of turning the hyperplane to a different angle. \\

In order to make this comparison, we need to have a good parameter space of the hyperplanes with respect to the cone. This is possible if we have a $\mathbb{Q}$-Gorenstein cone.\\

Let $C^\lor \subset M_\real$ be a top dimensional rational cone, $C$ be its dual, $int(C)$ be the interior of $C$, and $C(1)$ be the set of rays in $C$ inward normal to facets in $C^\lor$. For each $\rho \in C(1)$, we let $v_\rho$ be the primitive vector in $\rho$.\\

\begin{defin}
$C^\lor$ is said to be a $\mathbb{Q}$-Gorenstein cone if it satisfies the following two conditions.
\begin{description}
\item [i] (smoothness) For each face $F \subset C^\lor$, the subset of $C(1)$ normal to $F$ can be extended to the $\inte$-basis of $N$.
\item[ii] ($\mathbb{Q}$-Gorenstein) There exists $\lambda \in M$ and some $l \in \inte_{>0}$ such that $(\lambda,v_\rho)=-l$ holds for all  $\rho \in C(1)$.
\end{description}
\label{defCYcone}
\end{defin}

\begin{defin}
Fixing a primitive vector $\xi \in int(C)\cap M$, let $\Delta_{\xi}=\{x \in C^\lor | (x,\xi) \leq 1\}$ and define the lattice points counting function as $$n_\xi(k) = \# (k\Delta_\xi \cap M).$$ For every chosen $\xi$, a ratio $q$ is defined by the equality $(\lambda,\xi)=-ql$.
\label{defxi}
\end{defin}

\vspace{0.5cm}
The above notations and definitions are summarized in the following figure.
\begin{figure}[h]
\includegraphics[scale=.85]{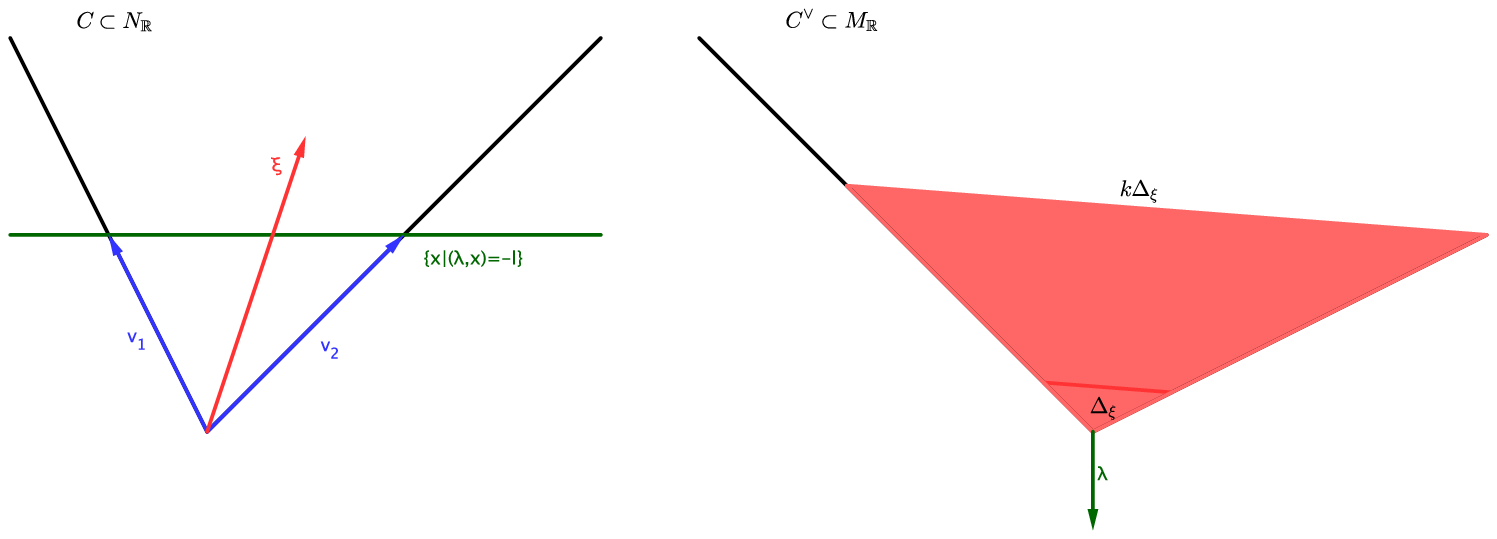}
\caption{}
\label{fig:conefigure}
\end{figure}

$n_\xi(k)$ is the counting function we are interested in; we associate a non-compact toric manifold $Y_C$ with a $\comp^*$-action to each chosen $\xi$ and relate the counting function to some geometric invariants of $Y_C$. We can define $\mathcal{W}$ as $Y_C/\comp^*$, and compactify $Y_C \rightarrow \mathcal{W}$ as a $\mathbb{P}^1$ bundle $\pi : \mathcal{X} \rightarrow \mathcal{W}$.  It turns out that $$n_\xi(k) = \chi(\mathcal{X},\mathcal{L}^{\otimes k })$$\\
for some toric line bundle $\mathcal{L}$ on $\mathcal{X}$.\\

\begin{remark}
$Y_C$ is related to Sasakian geometry. A quick review is given in the appendix.\\
\end{remark}

For example, if $C = cone(e_1,\dots,e_{n+1}) \subset \real^{n+1}$ is the standard cone and $\xi$ is chosen to be $e_1+\dots+e_{n+1}$, then we have $Y_C= \comp^{n+1}-\{0\}$, $\mathcal{W} = \mathbb{P}^n$, and $\mathcal{X}=\mathbb{P}_{\mathcal{W}}(\mathcal{O}(-1) \oplus \mathcal{O})$. $\mathcal{L}$ is the relative $\mathcal{O}(1)$ bundle for the map $\mathcal{X} \rightarrow \mathcal{W}.$\\

When $\mathcal{X}$, $\mathcal{W}$ are smooth, we have $$\chi(\mathcal{X},\mathcal{L}^{\otimes k })=\int_\mathcal{X} ch(\mathcal{L}^{\otimes k}) Td(\mathcal{X})$$ and we get an expression of $b_i$'s in terms of integrals of Chern classes on $\mathcal{W}$. In particular, $b_{n-1}$ is expressed as a combination of $Vol(\Delta_\xi)$ and $$\int_\mathcal{W} c_2(\mathcal{W}) c_1(\mathcal{W})^{n-2}.$$ 
\\
Furthermore, if we are lucky enough that $\xi$ is parallel to $\xi_c$, the above term will have a lower estimate in terms of $Vol(\Delta_\xi)$. We indeed have a K\"ahler structure on $Y_C$ that is transversal K\"ahler-Einstein (appendix \ref{tranKE}). In that case, this K\"ahler structure will induce a K\"ahler-Einstein structure on $\mathcal{W}$. So we can use the Yau's inequality (appendix \ref{pointchernnoieq}) to estimate it in terms of $Vol(\Delta_\xi)$.\\

In general, $\xi_c$ may not even be rational. However, transversal Chern classes are defined and the inequality still holds. In that case, we know the transversal Yau's inequality is strict from Lemma \ref{uniformization}. For a primitive vector $\xi$ such that its direction is close enough to $\xi_c$, we still have our lower estimate by continuity. 

\begin{remark}
$\mathcal{X}$ and $\mathcal{W}$ are orbifolds in most cases.
\end{remark}

\vspace{0.5cm}

Let us begin by giving some notations and definitions concerning the spaces mentioned above. We define a K\"ahler manifold $Y_C$ as follows: There is a map
$$
\inte^{C(1)} \rightarrow  N$$
given by the assignment
$$e_\rho \mapsto  v_{\rho}.$$

\noindent Tensoring with $\comp$ and taking the quotient gives rise to a group homomorphism
$$\ctorus^{C(1)} \rightarrow T_N^\comp,$$ and let $\kappa_C$ be the kernel. Then $Y_C$ is given by the G.I.T. quotient of $\comp^{C(1)}-\{0\}$ by $\kappa_C$ via the natural action of $\ctorus^{C(1)}$ (see e.g. \cite{AB01}).\\

A similar construction using the symplectic quotient gives a symplectic structure on $Y_C$. In general, any $\xi \in N_\real$ gives a vector field $\xi^{\#}$ on $Y_C$ by the real torus action. We also denote $\xi^{1,0}=\sqrt{-1}\xi^{\#}-J\xi^{\#}$ to be the corresponding holomorphic vector field. If $\xi$ is primitive, we have a $\comp^*$-action on $Y_C$ given by the holomorphic vector field. \\

We define $\mathcal{W}=Y_C/\comp^*$. Hence $Y_C$ can be viewed as a $\comp^*$-bundle over $\mathcal{W}$. Using the standard action of $\comp^*$ on $\comp$, we can associate a line bundle $L_\mathcal{W}$ over $\mathcal{W}$. We let $\mathcal{X}=\mathbb{P}_\mathcal{W}(L_\mathcal{W}\oplus \mathcal{O})$ be the $\mathbb{P}^1$-bundle over $\mathcal{W}$. There is a relative $\mathcal{O}(1)$ bundle on $\mathcal{X}$, denoted by $\mathcal{L}$, associating to the natural projection map $\pi:\mathcal{X} \rightarrow \mathcal{W}$. \\

From the symplectic perspective, the space $Y_C$ is related to another compact odd dimensional space. If we let $r:Y_C \rightarrow \real$ be a smooth function such that $\half r^2$ is the moment map of the induced $\mathbb{S}^1$-action, then $S:=\{r=1\} \subset Y_C$ is a principal $\mathbb{S}^1$-bundle over $\mathcal{W}$. $\mathcal{W}$ can be viewed as the symplectic quotient of $Y_C$ via the $\mathbb{S}^1$-action. This gives $\mathcal{W}$ a structure of K\"ahler orbifold. \\

$(S, \xi^{\#}|_S)$ is indeed a Sasakian manifold and $(Y_C, \xi^{\#},\omega,J)$ can be viewed as its (K\"ahler) cone manifold. The relationship between $(S, \xi^{\#}|_S)$ and $(Y_C, \xi^{\#},\omega,J)$ is a one to one correspondence. Furthermore, $(S, \xi^{\#}|_S)$ is still defined even when $\xi$ is an irrational vector. The transversal Yau's inequality still holds when $(S, \xi^{\#}|_S)$ admits a transversal K\"ahler-Einstein metric. For details, we refer readers to \cite{AB09} and \cite{FOW09}. \\

From the toric perspective, in case $\xi$ is primitive, we can complete the cone $C \subset N_\real$ to a complete fan $\Sigma$ by adding $\xi$ and $-\xi$ to it. We can also define a quotient lattice $N'=N/\inte \xi$ and $p: N_\real \rightarrow N_\real'$. The image of $C$ together with its faces form a fan $\Sigma' \subset N_\real'$. Then we have two complete fans $\Sigma$ and $\Sigma'$, having $|C(1)|+2$ and $|C(1)|$ rays respectively. The spaces $\mathcal{X}$ and $\mathcal{W}$ are the toric orbifolds associated to the fans $\Sigma$ and $\Sigma'$ respectively, using the G.I.T. quotient construction (see \cite{AB01}). The line bundle $\mathcal{L}$ is identified with the divisor bundle of $D_{-\xi}$.\\

Using the toric perspective, we have (see e.g. \cite{TGB}) $$n_\xi(k) = dimH^0(\mathcal{X}, \mathcal{L}^{\otimes k}).$$ In order to get $$n_\xi(k)=\chi(\mathcal{X},\mathcal{L}^{\otimes k}),$$ we use the convexity of the supporting function corresponding to the line bundle $\mathcal{L}$ and the Demazure vanishing theorem \cite{TGB}.\\
Using the Kawasaki-Riemann-Roch formula in \cite{KW79}, we get the expansion
$$\begin{array}{rl}
n_\xi(k)=&\int_{\mathcal{X}} ch(\mathcal{L}^{\otimes k}) Td(\mathcal{X})+ R_{orb}\\
\\
=&b_{n+1} k^{n+1} + b_n k^n + b_{n-1} k^{n-1}+ \cdots.\\
\end{array}$$

\noindent Here $R_{orb}$ is the contribution from orbifold singular strata (starting from codimension 2). Presence of $R_{orb}$ results in periodicity of $b_i$'s in $k$ (for $i\leq n-1$); more explicitly, $b_i(k)$'s are composition of rational functions with functions of the form $e^{\frac{2\pi \sqrt{-1} ck}{N}}$. The contribution of $\int_{\mathcal{X}} ch(\mathcal{L}^{\otimes k}) Td(\mathcal{X})$ to the coefficient $b_{n-1}$ only involves integrals of products of $c_1(\mathcal{W})$, $c_1(L_\mathcal{W})$, and $c_2(\mathcal{W})$. \\

First, there is an equality relating $c_1(\mathcal{W})$ and $c_1(L_\mathcal{W})$. Let $\xi^{1,0}$ be the holomorphic vector field on $Y_C$ associated to $\xi$, and let $L=\comp \xi^{1,0}$ be the trivial line bundle over $Y_C$ with a Hermitian metric $\half r^2$. Its quotient by $\comp^*$ gives a Hermitian metric on $L_\mathcal{W} \rightarrow \mathcal{W}$ with $c_1(L_\mathcal{W}^*) = \frac{1}{2\pi} [d\eta]$ on $\mathcal{W}$, where $\eta=d^c log(r)$. Here $\eta$ is the contact $1$-form of the Sasakian manifold $S$, and $d\eta$ descends to $\mathcal{W}$ (see e.g. \cite{FOW09}). $c_1(\mathcal{W})$ is also related to the class $[d\eta]$, and there is a equality $$c_1(\mathcal{W}) =\frac{q}{2\pi}[d\eta]=qc_1(L_\mathcal{W}^*),$$ where $q$ is the ratio defined in Definition \ref{defxi}.\\

The integral involving only the first Chern class is given in \cite{MSY06} by
$$\int_{\mathcal{W}} c_1(L_\mathcal{W}^*)^{n}
=(\frac{1}{2\pi})^{n+1} \int_S (d\eta)^n \wedge \eta
=(n+1)! Vol (\Delta_\xi).
$$\\
Second, there is a term 
\begin{equation}\int_\mathcal{W} c_2(\mathcal{W}) c_1(\mathcal{W})^{n-2}=\int_S c_2^{B}(S)c_1^B(S)^{n-2} \eta
\label{transChernclass}
\end{equation}\\
in the expression of $b_{n-1}$. Here $c_k^B(S)$'s are basic Chern classes of $S$ defined in \cite{FOW09}.\\

A key observation is that this term can be controlled if $\xi$ is suitably chosen: for our cone $C^\lor$, there is a canonical direction $\xi_c$ associated to it. (It is the unique minimizer of the volume function $F(\xi) = Vol(\{y \in C^\lor| (\xi,y) \leq 1\})$ restricting to $I=\{x \in int(C)|(x,\lambda)=-(n+1)l\}$.) For a general $\xi$ (may not be rational), $Y_C$ and $S$ are still defined and we can discuss the transversal K\"ahler geometry of $S$, even though the quotient $\mathcal{W}$ may not exist. Those $\xi$'s parallel to $\xi_c$ are exactly those with $S$ having a transversal K\"ahler-Einstein metric. In that case, we can obtain a lower bound of \eqref{transChernclass} by the following transversal Yau's inequality.\\
 \begin{lemma}[Transversal Yau's inequality]
Let $(S,g,J)$ be a Sasakian manifold of dimension $2n+1$ such that its transversal Ricci form satisfies $$Ric^T = \tau (\half d\eta)$$ for some $\tau \in \real$. Then
\begin{equation}
\int_S \lbrack c_2^B(S)-\frac{n}{2(n+1)}(c_1^B(S))^2\rbrack \wedge (\half d\eta)^{n-2}\wedge\eta\geq 0.
\label{tranIneq}
\end{equation}
If the equality sign holds, then the Einstein metric has constant transversal holomorphic bisectional curvature.
\end{lemma}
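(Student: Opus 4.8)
The plan is to reduce the transversal Yau inequality on the Sasakian manifold $S$ to the ordinary Yau--Miyaoka--Chern-number inequality on the Kähler quotient, in the quasi-regular case, and then argue that the general (irregular) case follows by approximation. Concretely, when $\xi$ is rational the transversal structure on $S$ descends to an honest Kähler-Einstein structure on the orbifold $\mathcal{W}$, and the basic Chern classes $c_k^B(S)$ pull back to the orbifold Chern classes $c_k(\mathcal{W})$ via the identification $\int_S \alpha \wedge (\half d\eta)^{n-2}\wedge \eta = \int_{\mathcal{W}} \alpha'$ of the basic cohomology with the orbifold cohomology of $\mathcal{W}$. Under this identification the quantity $\int_S [c_2^B - \tfrac{n}{2(n+1)}(c_1^B)^2]\wedge(\half d\eta)^{n-2}\wedge\eta$ becomes exactly $\int_{\mathcal{W}}[c_2(\mathcal{W})-\tfrac{n}{2(n+1)}c_1(\mathcal{W})^2]c_1(\mathcal{W})^{n-2}$, so the statement is the classical inequality already recorded in appendix \ref{pointchernnoieq}.

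\textbf{The pointwise source of the inequality.} Rather than appealing to the quotient, I would prefer to prove the inequality intrinsically on $S$ so that the irrational case needs no separate treatment. The key is that the transversal Einstein condition $Ric^T = \tau(\half d\eta)$ forces, at each point, an algebraic inequality between the two transversal curvature invariants. Writing $R^T$ for the transversal curvature tensor of the transversal holomorphic distribution (a Kähler curvature tensor on the $n$-dimensional transversal holomorphic tangent space), the Einstein condition says the transversal Ricci tensor is a multiple of the transversal metric. The Chern-form combination $c_2^B - \tfrac{n}{2(n+1)}(c_1^B)^2$, contracted against $(\half d\eta)^{n-2}\wedge\eta$, integrates the pointwise trace-free curvature norm: one has the pointwise identity
\begin{equation}
\Bigl(c_2^B - \tfrac{n}{2(n+1)}(c_1^B)^2\Bigr)\wedge(\tfrac12 d\eta)^{n-2}\wedge\eta = C_n\,\bigl|R^T_0\bigr|^2\,\mathrm{dvol}_S,
\label{pointwiseYau}
\end{equation}
where $C_n>0$ and $R^T_0$ is the fully trace-free part of the transversal curvature operator (the transversal Weyl/Bochner tensor together with the trace-free Ricci contribution). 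Because $Ric^T$ is pure trace, the trace-free Ricci part vanishes and $R^T_0$ reduces to the transversal Bochner curvature; in either case the right side of \eqref{pointwiseYau} is manifestly nonnegative, giving \eqref{tranIneq} upon integration.

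\textbf{Carrying out the steps.} First I would fix a transversal unitary frame for the transversal holomorphic tangent bundle and express $c_1^B$ and $c_2^B$ as the standard symmetric functions of the transversal curvature 2-forms $\Omega^i_j$. Second, I would substitute the Einstein condition to eliminate the Ricci trace, reducing the linear combination to the squared norm of the trace-free curvature exactly as in the compact Kähler proof. Third, I would integrate against $(\half d\eta)^{n-2}\wedge\eta$ and invoke that this is the transversal volume form (times the contact direction), so the integrand is pointwise $\geq 0$. The equality discussion then follows immediately from \eqref{pointwiseYau}: equality forces $R^T_0\equiv 0$, i.e. the trace-free transversal curvature vanishes identically, which together with the Einstein condition means the transversal curvature is pure trace at every point — precisely the statement that the metric has constant transversal holomorphic bisectional curvature.

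\textbf{The main obstacle} I anticipate is making the pointwise identity \eqref{pointwiseYau} rigorous in the \emph{transversal} (basic) setting, since the basic Chern forms are defined via a transversal connection on the normal bundle of the Reeb foliation and one must check that the algebraic manipulations valid for an honest Kähler curvature tensor carry over verbatim to $R^T$. The cleanest route is to work on a foliated chart where the transversal structure is modeled on a local Kähler manifold: the computation is then literally the compact Kähler computation, and the only extra input is that $d\eta$ is basic and closed so that wedging with $(\half d\eta)^{n-2}\wedge\eta$ reproduces the transversal volume. With that local model the irrationality of $\xi$ plays no role, so the inequality — and its equality case — holds for every Sasaki-Einstein $S$, which is exactly what the later continuity argument needs.
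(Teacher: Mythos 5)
Your proposal is correct and follows essentially the same route as the paper: the paper's own treatment (appendix, Theorem \ref{pointchernnoieq}) consists precisely of recalling the pointwise Yau identity $[c_2-\tfrac{n}{2(n+1)}c_1^2]\wedge\omega^{n-2}=\delta\,\omega^n$, $\delta\geq 0$, for K\"ahler--Einstein metrics and then transplanting it to the transversal K\"ahler--Einstein setting, where the basic Chern forms are computed from the transversal curvature in foliated charts and the integration against $(\half d\eta)^{n-2}\wedge\eta$ yields the inequality, with equality forcing constant transversal holomorphic bisectional curvature. Your intrinsic pointwise argument is exactly this generalization, so no further comparison is needed.
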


As basic Chern classes depend only on the Reeb vector field $\xi^{\#}$ (or equivalently, the transversal complex structure) and not on the metric, the inequality holds whenever the Reeb vector field is given by $\xi_c$. It can be argued that it also holds for those $\xi$'s parallel to $\xi_c$. For details, readers may consult the appendix.\\

In the case that $\xi_c$ is not a rational vector, the following uniformization lemma (Lemma \ref{uniformization}) tells us that we must have a strict inequality for those $\xi$'s parallel to $\xi_c$. Hence, for primitive $\xi$ having its direction close enough to $\xi_c$, we still have the same estimate.\\

The remainder of this section is devoted to prove Lemma \ref{uniformization}. It is a statement about toric Sasakian geometry. Readers may skip the proof and progress to the next section, where we will deal with the orbifold's contribution $R_{orb}$.\\

As mentioned in the appendix, given a $\mathbb{Q}$-Gorenstein cone $C^\lor$, we can associate to it a unique toric Sasaki-Einstein manifold $(S,g,J_c)$ with the Reeb vector field given by $\xi_c$. For this space, the Yau's inequality holds and there is a uniformization result:

\begin{lemma}
If equality holds in \eqref{tranIneq} for the space $(S,g,J_c)$, then $|C^\lor (1)|=n+1$ and $\xi_c$ is a rational vector.
\label{uniformization}
\end{lemma}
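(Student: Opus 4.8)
The plan is to convert the rigidity forced by the equality case into combinatorial data about the cone. By the preceding Lemma, equality in \eqref{tranIneq} for $(S,g,J_c)$ means the transversal K\"ahler--Einstein metric has constant transversal holomorphic bisectional curvature. First I would fix the sign of this constant. Since $(S,g,J_c)$ is Sasaki--Einstein, its transversal Ricci form is $Ric^T = 2(n+1)(\half d\eta)$, i.e. $\tau = 2(n+1) > 0$; tracing the constant-curvature expression then shows the transversal holomorphic bisectional curvature is a \emph{positive} constant, and the transversal curvature tensor takes the Fubini--Study form $R^T_{i\bar{j}k\bar{l}} = c\,(g_{i\bar{j}}g_{k\bar{l}} + g_{i\bar{l}}g_{k\bar{j}})$ with $c>0$.

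Next I would apply the transversal analogue of the classification of K\"ahler metrics of constant holomorphic bisectional curvature. Because the transversal curvature tensor is a genuinely foliate (tensorial) object, independent of whether the Reeb flow is rational, this model determines the local transverse geometry as that of $\mathbb{P}^n$ with a multiple of the Fubini--Study metric. The transverse developing map, together with the compactness of $S$, then realizes the leaf space of the Reeb foliation as $\mathbb{P}^n$ and in particular forces all leaves to be closed; that is, the Reeb foliation is quasi-regular. Quasi-regularity is exactly the statement that $\xi_c$ generates a locally free $\mathbb{S}^1$-action rather than an irrational torus flow, hence $\xi_c$ is a rational vector. This gives the second assertion.

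With $\xi_c$ rational, the quotient $\mathcal{W} = S/\mathbb{S}^1$ is a well-defined toric K\"ahler orbifold and, by the uniformization above, is biholomorphic to $\mathbb{P}^n$. Its defining fan is $\Sigma'$ in $N_\real' = N_\real/\inte\xi_c$, which (as recorded earlier) has exactly $|C(1)|$ rays, namely the images under $p$ of the generators $v_\rho$; since the fan of $\mathbb{P}^n$ has precisely $n+1$ rays we obtain $|C(1)| = n+1$. Thus $C$, and dually $C^\lor$, is simplicial, giving $|C^\lor(1)| = n+1$, which is the first assertion.

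The hard part will be the second step: securing the transversal (orbifold) uniformization in a form valid when $\xi_c$ is \emph{a priori} irrational, and ruling out that the leaf space is a nontrivial free quotient $\mathbb{P}^n/\Gamma$ rather than $\mathbb{P}^n$ itself. I would address the first issue by arguing entirely at the level of the transverse curvature tensor and transverse holonomy, which make sense for the foliation regardless of regularity, deducing quasi-regularity only afterwards from the compactness of the leaf-space model. For the second, the toric structure on $\mathcal{W}$ together with the smoothness condition (i) of Definition \ref{defCYcone} pins the leaf space to the standard $\mathbb{P}^n$, so that the ray count $|C(1)| = n+1$ is exact.
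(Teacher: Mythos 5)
Your step 1 (the constant is positive and the transverse curvature tensor takes the Fubini--Study form) matches the paper's starting point, but the heart of the lemma --- deducing quasi-regularity, hence rationality of $\xi_c$ --- is exactly where your argument has a genuine gap, and you acknowledge as much by calling it ``the hard part.'' The mechanism you propose fails as stated: you claim that the transverse developing map ``together with the compactness of $S$'' realizes the leaf space as $\mathbb{CP}^n$ and forces leaves to close. Compactness of the total space alone can never yield such a conclusion. An irrational linear foliation of a torus is transversally homogeneous (transversally flat) on a compact manifold, its developing map $\mathbb{R}^2\rightarrow\mathbb{R}$ is a fibration with closed fibres, and yet every leaf downstairs is dense and the leaf space is not even Hausdorff. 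What rescues the positive-curvature case is that $S$ is Einstein with positive Ricci curvature, so by Myers' theorem $\pi_1(S)$ is finite and the universal cover $\tilde{S}$ is \emph{compact}; only then do the closed fibres of a developing fibration have compact images downstairs, i.e.\ closed leaves. You never invoke finiteness of $\pi_1(S)$, so the proof of rationality, which is the main content of the lemma, is missing.

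For comparison, the paper avoids transverse uniformization and foliation theory altogether. From constant transversal holomorphic bisectional curvature, a second fundamental form computation shows the K\"ahler cone $Y=C(S)$ satisfies $c_2(Y)\wedge\omega^n=0$, hence $Y$ is flat and $S$ has constant positive sectional curvature --- a spherical space form, with $\pi_1(S)$ finite. Passing to the compact universal cover identifies $S$ with the round $\mathbb{S}^{2n+1}$; since a Killing field on a connected manifold is determined by its $1$-jet at one point, and the Sasakian conditions pin the $1$-jet of the Reeb field to that of the standard Hopf field, the Reeb flow \emph{is} the Hopf flow, whose orbits close up --- that is the paper's proof that $\xi_c$ is rational. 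The ray count is then obtained as in your last step, but on the cover: the lifted (possibly non-effective) torus action descends to a toric K\"ahler structure on $\mathbb{CP}^n=\mathbb{S}^{2n+1}/\mathbb{S}^1$, conjugate to the standard one, whose moment polytope has $n+1$ facets. If you wished to salvage your transverse route you would need a Blumenthal-type fibration theorem for transversally homogeneous foliations \emph{plus} Myers' theorem for compactness of $\tilde{S}$, at which point you have essentially reconstructed the paper's covering argument with heavier machinery. Finally, your worry about leaf spaces of the form $\mathbb{CP}^n/\Gamma$ disappears for free quotients (every element of $PU(n+1)$ has a fixed point), but orbifold quotients are a real possibility, and this is precisely why the paper works on the universal cover with the lifted torus action rather than directly on the quotient $\mathcal{W}$.
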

\begin{proof}
\noindent
\begin{description}
\item [Step 1]
Since $C(S)=:Y$ is Ricci flat, we have a pointwise Yau's inequality for $Y$: 
$$c_2(Y)\wedge \omega^n \geq 0.$$

From the fact that $S$ has constant transversal holomorphic bisectional curvature, we have that $$ \lbrack c_2^B-\frac{n}{2(n+1)}(c_1^B)^2\rbrack \wedge (\half d\eta)^{n-2}\wedge\eta=0.$$ A second fundamental form computation implies $c_2(Y)\wedge \omega^n=0$. This further says $Y$ is flat and $S$ has positive constant sectional curvature.
\\
\item [Step 2]
Let $\tilde{S}$ be the universal cover of $S$ and $\tilde{Y}=C(\tilde{S})$, which is a finite covering of degree $N=| \pi_1(Y)|=|\pi_1(S)|$ as $S$ is Ricci positive. We lift the Sasakian structure to $\tilde{S}$ and hence the K\"ahler structure to $\tilde{Y}$. We let $p :\tilde{Y} \rightarrow Y$ be the covering map.

The torus action can be lifted to $\tilde{Y}$ (may be non-effective), and we have
$$\begin{array}{ccc}
\torus^{n+1} &\curvearrowright & \tilde{Y}\\
\phi \downarrow& & \downarrow\\
\torus^{n+1} & \curvearrowright & Y,
\end{array}$$
where $\phi$ is multiplication by $N$. We define $\tilde{\xi}_c \in \tilde{\mathfrak{t}}$ by letting $\phi_*(\tilde{\xi}_c)=\xi_c$.\\

\item [Step 3] We may assume $Y$ is simply connected with constant holomorphic bisectional curvature by considering $\tilde{Y}$ instead. $S$, as a Riemannian manifold, is identified with the $2n+1$ dimensional sphere $\mathbb{S}^{2n+1}$. To identify the Sasakian structure, it suffices to identify the Killing vector field $K_c$ with $K_{std}$, generated by $\xi_c$ and $\xi_{std}$ respectively. Fixing a point $p_0 \in S$, we can choose isometry between $S$ and $\mathbb{S}^{2n+1}$, which identify $(T_{p_0}S, K_c(p_0), \nabla K_c (p_0))$ with $(T_1\mathbb{S}^{2n+1}, K_{std}(1),\nabla K_{std} (1))$, for some point $1\in \mathbb{S}^{2n+1}$. This identifies $K_c$ with $K_{std}$.\\

\item [Step 4] We have a possibly non-standard action $\torus^{n+1}\curvearrowright \mathbb{S}^{2n+1}$. The flow line of $K_c$ closes up, this shows the rationality of $\xi_c$.  Taking the quotient, we have $\torus^{n} \curvearrowright (\comp \mathbb{P}^n,\omega_{std})$ being a toric K\"ahler manifold. Hence conjugation by automorphism of $(\comp \mathbb{P}^n,\omega_{std})$ gives the standard action. In particular, the moment map image is a cone with $n+1$ rays.
\end{description}
\end{proof}

% !TEX root = Index page.tex
\section{Riemann-Roch for orbifolds}
This section is devoted to the Riemann-Roch computation for orbifolds.\\

For each face $\tau \subset C$, we define $$\tau^1=cone(\tau \cup \{\xi\}), \tau^{-1}=cone(\tau \cup \{-\xi\}).$$ Here $cone(F):=\{ \sum_i a_i v_i | v_i \in F, a_i \geq 0\}$ is the cone generated by the vectors in $F$, where $F$ is a subset of a vector space. We let $\Sigma$ be the fan consisting of all $\tau$, $\tau^1$, $\tau^{-1}$ for all faces $\tau \subset C$. $\Sigma$ is the normal fan for the polytope $\{y \in C^\lor | r_1 \leq (\xi,y) \leq r_2 \}$ for any $r_2 >r_1>0$.\\

As in the previous section, we have an orbifold $\mathcal{X}$, together with an orbi-line bundle $\mathcal{L}$, which compute the function $n_\xi(k)$:
\begin{equation}
n_\xi(k)= \chi(\mathcal{X},\mathcal{L}^{\otimes k})=\int_{\mathcal{X}} ch(\mathcal{L}^{\otimes k}) Td(\mathcal{X})+ R_{orb}.
\end{equation}
The formula for $R_{orb}$ is given in \cite{GU97}. We will concentrate on the contribution of $R_{orb}$ to the coefficient $b_{n-1}$. For that purpose, we consider the codimension two singular stratum of $\mathcal{X}$ (since there is no singular stratum of codimension one).\\

The space $\mathcal{X}$ is a quotient of affine space $\comp^{\Sigma(1)}$ ($\Sigma(1)$ stands for the set of all rays in the fan $\Sigma$) by some subgroup $\kappa_\Sigma$ of $\ctorus^{\Sigma(1)}$ via the quotient construction mentioned in \cite{AB01}.  Each ray in the fan corresponds to a coordinate of the affine space $\comp^{\Sigma(1)}$ before taking the quotient. Hence vanishing of some of the coordinate functions defines a closed sub-orbifold (may be non-effective) of the quotient. For example, codimension one sub-orbifolds that correspond to rays in $\Sigma(1)$ are the toric divisors. \\

Codimension two closed toric sub-orbifolds are defined by two rays. For each $\rho \in C(1)$ and $\alpha$ ($\alpha=\pm 1$), we have a closed sub-orbifold $F^\alpha_{\rho}$ given by vanishing of the coordinate functions corresponding to the rays $\rho$ and $\alpha \xi$. These give all the singular strata necessary for the computation of the coefficient $b_{n-1}$.\vspace{0.3cm}\\

To obtain a coordinate chart, we can choose an orbifold chart by taking any maximal cone  $\tau \supset \rho$. By the smoothness assumption of the cone $C^\lor$, we can take a $\mu \in N$ together with the primitive vectors of the rays $\{v_{\rho'} | \rho' \in \tau(1) \}$ to be a $\mathbb{Z}$-basis of $N$ and write $\xi$ as $\xi = \sum c_{\rho'} v_{\rho'} + d \mu$, for some $c_{\rho'} ,d \in \mathbb{Z}$. The cone $\tau^\alpha$ gives an orbifold chart $\mathbb{Z}_d \curvearrowright \comp^{\tau(1)}\times \comp$ of $\mathcal{X}$. $\comp^{\tau(1)}\times \comp \hookrightarrow \comp^{\Sigma(1)}$ (the last coordinate corresponds to $\alpha \xi$) is a subset given by letting the coordinate functions corresponding to the rays other than $\{\alpha \xi\} \cup \tau(1)$ be $1$. The group $\mathbb{Z}_d$ is the subgroup of $\kappa_\Sigma$ which preserve the subset $\comp^{\tau(1)}\times \comp$. Then $\comp^{\tau(1)}\times \comp$ cover a dense open subset of $\mathcal{X}$. A local chart of $F^\alpha_{\rho}$ is given by vanishing of coordinate functions that correspond to $\rho$ and $\alpha \xi$. \vspace{0.3cm}\\

If we let $\hat{d} = d/ \lbrack g.c.d. (\{c_{\rho'}\}_{\rho' \neq \rho },d) \rbrack$ and $\Gamma_{\rho}^\alpha=\mathbb{Z}_{\hat{d}}\leq\mathbb{Z}_d$, then $\Gamma_{\rho}^\alpha$ acts trivially on $F^\alpha_{\rho}$. Let $\theta$ be the induced action of $\Gamma_{\rho}^\alpha$ on $\mathcal{L}|_{F_{\rho}^\alpha}$ ($\eta \in \Gamma_{\rho}^\alpha$ acts by multiplication by $\theta(\eta)$). These are the combinatorial data we needed for our computations.\vspace{1cm}\\

According to the Kawasaki-Riemann-Roch formula in \cite{GU97}, we have 
$R_{orb}=\sum_{\rho \in C(1)} \sum_\alpha KRR(\rho,\alpha,\mathcal{L}^{k})+\mathcal{O}(k^{n-2}),$
where
\[\begin{array}{ll}
&KRR(\rho, \alpha,\mathcal{L}^{\otimes k})\\
=&\displaystyle \sum_{\eta \in \Gamma_{\rho}^\alpha-\{0\}}\frac{\theta(\eta)^k}{(1-\eta^{-c_{\rho}})(1-\eta^\alpha)}\int_{F_{\rho}^\alpha} \frac{c_1(\mathcal{L})^{n-1}}{(n-1)!} k^{n-1}
 + \mathcal{O}(k^{n-2}).\\
\end{array}\]
Recall that $q \in \mathbb{Q}$ is the ratio such that $(\xi, \lambda) =-ql$ ($\lambda$ as in definition \eqref{defCYcone}). For $n \geq 2$, we have
\begin{equation*}
	\begin{array}{rl}
	b_{n+1}=&\displaystyle \frac{1}{(n+1)!} \int_{\mathcal{W}} c_1(L_\mathcal{W}^*)^n\\
	\\
		 =&\displaystyle Vol_{M_\real}(\Delta_\xi)\\
		\\

	b_n=&\displaystyle \frac{1}{2n!} \int_{\mathcal{W}}\lbrace c_1(L_\mathcal{W}^*)^n+ c_1(L_\mathcal{W}^*)^{n-1} c_1(\mathcal{W})\rbrace\\
	\\
	=&\displaystyle \frac{1+q}{2}(n+1) Vol_{M_\real}(\Delta_\xi)\\
	\\
	b_{n-1}=&\displaystyle \frac{(q^2+3q+1)}{12(n-1)!}\int_{\mathcal{W}} c_1(L_\mathcal{W}^*)^n 
	+\frac{1}{12(n-1)!}\int_{\mathcal{W}} c_2(\mathcal{W})c_1(L_\mathcal{W}^*)^{n-2}  \\
	\\
	&\displaystyle +\frac{1}{(n-1)!}\sum_{\rho,\alpha} \sum_{\eta \in \Gamma_\rho^\alpha-\{0\}}\frac{\chi(\eta)^k}{(1-\eta^{-c_\rho})(1-\eta^\alpha)}\int_{F_\rho} c_1(L_\mathcal{W}^*)^{n-1}\\
	\\
	=&\displaystyle \frac{(q^2+3q+1)}{12n(n+1)} Vol_{M_\real}(\Delta_\xi)+\frac{1}{12(n-1)!}\int_{\mathcal{W}} c_2(\mathcal{W})c_1(L_\mathcal{W}^*)^{n-2}  \\
	\\
	&\displaystyle + \sum_{\rho \in C(1)} \frac{n}{|v_\rho|} \sum_{\eta \in  \Gamma_\rho^\alpha-\{0\}} \frac{1-\eta^{k
	+1}}{(1-\eta^{-c_\rho})(1-\eta)}  Vol_{n}(H_\rho)

	\end{array}
	\end{equation*}

\vspace{0.5cm}
\begin{remark}
\begin{enumerate}

\item The equality 
		$$\int_{F_\rho} c_1(L_\mathcal{W}^*)^{n-1}=\frac{n!}{|v_\rho|} Vol_{n}(H_\rho)$$
is similar to that in the previous section.
\vspace{0.5cm}

\item For the case $n=1$, the formula reads
$$
\begin{array}{ll}
	\chi(\mathcal{X},\mathcal{L}^{\otimes k}) 
	
	=&\displaystyle  ( k^2 + (1+q) k + q) Vol(\Delta_\xi) \\
	\\
	&\displaystyle + \sum_{\rho=\{\rho_1,\rho_2\}} \frac{1}{|v_\rho|} [\sum_{\eta \in  \Gamma_\rho^\alpha-\{0\}} \frac{1-\eta^{k
	+1}}{(1-\eta^{-c_\rho})(1-\eta)} ] Vol_1(H_\rho)
\end{array}
$$
\end{enumerate}
\end{remark}
We give a $2$-dimensional example to illustrate the contribution from orbifold singularities.

\begin{ex*}
Letting $N_\real =\real^2$, $C=cone(e_1,-e_1+3e_2)$, and $\xi = (1,1) \in \real^2$,
we have $\lambda = (-3,-2) \in (\real^2)^*$ and $q= \frac{5}{3}$. This gives $\Delta_\xi = cone\{(0,0),(0,1),(\frac{3}{4},\frac{1}{4})\}$, $Vol(\Delta_{\xi})=\frac{3}{8}$, $H_{e_1}= cone\{(0,0),(0,1)\}$, and $H_{-e_1+3e_2}= cone\{(0,0),(\frac{3}{4},\frac{1}{4})\}$ .

\vspace{0.5cm}
In this case, the lattice points counting function is 
$$\eta_\xi(k) = \frac{1}{16}\lbrace6k^2 +16k + 2(\sqrt{-1})^k\lbrack1+(-1)^k\rbrack + (-1)^k +11\rbrace.$$
\end{ex*}
\vspace{0.5cm}

Combining with the above sections, we have our main theorem:

\begin{mtheorem}
Given an $(n+1)$-dimensional $\mathbb{Q}$-Gorenstein cone $C^\lor \subset M_\real$,
with its canonical Reeb vector $\xi_c \in C \subset N_\real$,
if $\xi_c$ is rational, let $\xi \in N$ be the primitive vector parallel to $\xi_c$; otherwise, choose $\xi$ having its direction close enough to $\xi_c$. If we write\\
$$n_\xi(k)=b_{n+1}k^{n+1}+b_n k^n +b_{n-1}k^{n-1}+\mathcal{O}(k^{n-2}),$$\\
then we have\\
$$\begin{array}{rl}
 b_{n+1} = &\displaystyle Vol_{n+1}(\Delta_\xi)\\
 \\
 b_n =&\displaystyle \frac{1+q}{2}(n+1) Vol_{n+1}(\Delta_\xi)\\
 \\
 b_{n-1} \geq &\displaystyle \frac{n}{24} \lbrack q^2(3n+2)+2(3q+1)(n+1)\rbrack Vol_{n+1}(\Delta_\xi)\\
 \\
 &\displaystyle + \sum_{\rho \in C(1)} \frac{n}{|v_\rho|} [\sum_{\eta \in  \Gamma_\rho^\alpha-\{0\}} \frac{1-\eta^{k
	+1}}{(1-\eta^{-c_\rho})(1-\eta)} ] Vol_{n}(H_\rho)

\end{array}$$
\label{theorem}
\end{mtheorem}

%\input{two}
%\input{three}
% !TEX root = Index page.tex
\section{Appendix: Sasakian Geometry}
\subsection{Basic results and notations from Sasakian geometry}

We recall some definitions in Sasakian geometry, following \cite{FOW09} and \cite{MSY06}.\\

\begin{defin}
A Sasakian manifold is a Riemannian manifold $(S,g)$ of real dimension $2n+1$, together with a choice of $\real$-invariant complex structure $J$ on its cone manifold $(C(S),g^{C(S)}):=(S\times\realp,dr^2+r^2g)$, such that it is K\"ahler.
\end{defin}

Given a Sasakian manifold, we let $V=r\dd{r}$ be the Euler vector field and $K=JV$ be the Reeb vector field. 
There is an equivalent definition of Sasakian manifold from the symplectic aspect given in \cite{AB09}, which works better with toric geometry. We will freely interchange between the two definitions.\\

%First, from the fact that $\mathcal{L}_\xi J=0$ and using the following formula with $\nabla$ stand for the Levi-Civita connection.

%\begin{equation}
%(\mathcal{L}_X J)Y=(\conn{X}J)Y+J\conn{U}X-\conn{JY}X
%\label{liederj}
%\end{equation}
%We get $\mathcal{L}_V J=0$, now if we let

%From the fact that $d\log{r}=\frac{1}{r^2}g(V,\cdot)$, we have $d\log{r}(V)=1$ and hence the set $S=\{r=1\}$ gives a submanifold whose tangent is perpendicular to $V$. Since the orthogonal complement bundle $V^\perp$ is invariant under the flow of $V$, so if we define
%\begin{equation*}
%f:X\rightarrow \realp \times S\;\;\; by\;\;\; f(x)=(r,\rho_{t(x)}^V(x))
%\end{equation*}
%, where $t(x)$ is the unique moment that the flow line hit $S$, 

We follow the notations of transversal K\"ahler geometry of a Sasakian manifold introduced in \cite{FOW09}.\\

\begin{defin}
A Sasakian manifold $(S,g,J)$ is said to be transversal K\"ahler-Einstein if there is a real constant $\tau$ such that
\begin{equation}
Ric^{T}=\tau(\frac{1}{2}d\eta)
\end{equation}
where $\eta$ is the contact $1$-form on $S$.
\label{tranKE}
\end{defin}

\begin{remark}
A Sasakian manifold is transversal K\"ahler-Einstein (with constant $\tau=2n+2$) if and only if its cone manifold $C(S)$ is Ricci flat. 
\end{remark}
\vspace{0.5cm}

To obtain a Yau's inequality for transversal K\"ahler-Einstein manifolds, we recall

\begin{theorem}[Yau's inequality for K\"ahler-Einstein manifolds]
Let $(M,\omega,J,g)$ be a connected K\"ahler-Einstein manifold of complex dimension $n$. Then
\begin{equation}
\lbrack c_2(\nabla^{l.c.})-\frac{n}{2(n+1)}c_1(\nabla^{l.c.})^2\rbrack \wedge \omega^{n-2}= \delta\omega^{n}
\end{equation}
for some positive function $\delta$. $\delta \equiv 0$ if and only if $M$ has constant holomorphic bisectional curvature, where $\nabla^{l.c.}$ is the Levi-Civita connection on $M$ and $c_i(\nabla^{l.c.})$ is the corresponding $i$-th Chern form.
\label{pointchernnoieq}
\end{theorem}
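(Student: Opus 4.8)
The plan is to treat this as a pointwise identity between top-degree ($2n$-)forms and reduce it to a purely algebraic inequality on the curvature tensor of the Kähler metric. Since the two sides are both top-degree forms, it suffices to fix an arbitrary point $p \in M$ and work in normal holomorphic coordinates $(z^1,\dots,z^n)$ centered at $p$ with $g_{i\bar{j}}(p)=\delta_{ij}$. By Chern--Weil theory applied to the Chern connection, which for a Kähler metric coincides with $\nabla^{l.c.}$ on $T^{1,0}M$, the Chern forms are represented at $p$ by the universal invariant polynomials in the curvature endomorphism $\Theta^i_j = R^i_{j k\bar{l}}\,dz^k\wedge d\bar{z}^l$, namely $c_1 = \frac{\sqrt{-1}}{2\pi}\operatorname{tr}\Theta$ and $c_2 = \frac{1}{2}(\frac{\sqrt{-1}}{2\pi})^2\bigl(\operatorname{tr}\Theta\wedge\operatorname{tr}\Theta - \operatorname{tr}(\Theta\wedge\Theta)\bigr)$, where $\operatorname{tr}\Theta$ is the Ricci form.

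Next I would invoke the standard contraction lemma for wedging a product of $(1,1)$-forms with $\omega^{n-2}$: for coefficient matrices $a_{i\bar j}$ and $b_{i\bar j}$, the ratio of $(\cdots)\wedge\omega^{n-2}$ to $\omega^n$ at $p$ is a fixed linear combination of $(\operatorname{tr}a)(\operatorname{tr}b)$ and the contraction $\sum_{i,j}a_{i\bar j}b_{j\bar i}$. Feeding the curvature coefficients through this lemma turns $c_1^2\wedge\omega^{n-2}$ and $c_2\wedge\omega^{n-2}$ into universal polynomials, times $\omega^n$, in the three basic quadratic curvature invariants of the Kähler metric: the squared scalar curvature $s^2$, the Ricci norm $|Ric|^2=\sum_{i,j}|R_{i\bar j}|^2$, and the full curvature norm $|R|^2=\sum|R_{i\bar j k\bar l}|^2$. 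This reduces the whole statement to reading off the coefficients in $c_2 - \frac{n}{2(n+1)}c_1^2$.

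Now I would impose the Einstein condition, which at $p$ reads $R_{i\bar j}=\lambda\,\delta_{ij}$ for the Einstein constant $\lambda$, so that $s=n\lambda$, $|Ric|^2 = n\lambda^2 = s^2/n$, and in particular the traceless Ricci tensor vanishes. The role of the specific constant $\frac{n}{2(n+1)}$ is that, after the Einstein relation $|Ric|^2=s^2/n$ is substituted, every contribution coming from the scalar and Ricci parts of the curvature cancels, and $c_2-\frac{n}{2(n+1)}c_1^2$ collapses to a positive multiple of the squared norm of the totally trace-free part $W$ of the curvature tensor (the Kähler Bochner--Weyl tensor); that is, $\delta$ equals a positive constant times $|W|^2\geq 0$. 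For the equality case, $\delta\equiv 0$ forces $W\equiv 0$, and combined with the vanishing of the traceless Ricci tensor from the Einstein hypothesis this is exactly the relation $R_{i\bar j k\bar l}=c\,(\delta_{ij}\delta_{kl}+\delta_{il}\delta_{kj})$, i.e. constant holomorphic bisectional curvature.

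The hard part is the bookkeeping in the middle step: computing the universal polynomials expressing $c_1^2\wedge\omega^{n-2}$ and $c_2\wedge\omega^{n-2}$ in $s^2$, $|Ric|^2$, $|R|^2$ while keeping track of all combinatorial factors from the wedge with $\omega^{n-2}$ and from the Kähler symmetries of $R_{i\bar j k\bar l}$, and then verifying that $\frac{n}{2(n+1)}$ is precisely the coefficient that annihilates the reducible scalar-plus-Ricci part under the Einstein substitution, leaving the manifestly nonnegative $|W|^2$. Getting this one constant to come out exactly right, rather than leaving a residual indefinite Ricci term, is the crux of the argument and the step where the sign and normalization conventions must be handled with the greatest care.
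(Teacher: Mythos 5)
Your proposal is correct, and it is essentially the standard argument: the paper itself states this theorem without proof, citing Yau's original work (\cite{Yau77}), where the proof is exactly this pointwise Chern--Weil computation --- normal coordinates, the contraction lemma reducing $c_1^2\wedge\omega^{n-2}$ and $c_2\wedge\omega^{n-2}$ to combinations of $s^2$, $|Ric|^2$, $|R|^2$, the Einstein substitution $|Ric|^2=s^2/n$, and the identification of the remainder with the squared norm of the trace-free (Bochner-type) part of the curvature, whose vanishing in the Einstein case characterizes constant holomorphic bisectional curvature. The constant checks out: with the normalization $c_2\wedge\frac{\omega^{n-2}}{(n-2)!}=\frac{1}{8\pi^2}(s^2-2|Ric|^2+|R|^2)\frac{\omega^n}{n!}$ and $c_1^2\wedge\frac{\omega^{n-2}}{(n-2)!}=\frac{1}{4\pi^2}(s^2-|Ric|^2)\frac{\omega^n}{n!}$, the Einstein condition reduces $c_2-\frac{n}{2(n+1)}c_1^2$ to $\frac{1}{8\pi^2}\bigl(|R|^2-\frac{2n\lambda^2}{n+1}\bigr)$, which is precisely $\frac{1}{8\pi^2}|W|^2$ since the constant-curvature model $R_{i\bar jk\bar l}=c(\delta_{ij}\delta_{kl}+\delta_{il}\delta_{kj})$ with $\lambda=c(n+1)$ has squared norm $2c^2n(n+1)=\frac{2n\lambda^2}{n+1}$.
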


We can have the following lemma, generalizing the above to the transversal K\"ahler-Einstein case.

\begin{lemma}[Transversal Yau's inequality for Sasaki-Einstein manifolds]
Let $(S,g,J)$ be a Sasakian manifold of real dimension $2n+1$, which satisfies $$Ric^T = \tau (\half d\eta)$$ for some $\tau$. Then
\begin{equation}
\int_S \lbrack c_2^B(S)-\frac{n}{2(n+1)}(c_1^B(S))^2\rbrack \wedge (\half d\eta)^{n-2}\wedge\eta\geq 0,
\label{tranIneq}
\end{equation}
where $c_i^B(S)$'s are the basic Chern classes of $S$ defined in \cite{FOW09}.
If equality holds, then the Einstein metric has constant transversal holomorphic bisectional curvature.
\end{lemma}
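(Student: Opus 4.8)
The plan is to reduce the statement to the pointwise Kähler-Einstein inequality of Theorem \ref{pointchernnoieq}, applied leafwise to the transversal Kähler geometry of the Reeb foliation, and then to integrate the resulting basic identity against the contact form $\eta$. The point is that $c_2^B(S)-\frac{n}{2(n+1)}(c_1^B(S))^2$ is a basic $(2,2)$-form, and Yau's inequality in the genuine Kähler-Einstein case is a pointwise, algebraic curvature statement; so the difficulty is only to install the transversal version of Chern-Weil theory and then transport the local computation across foliation charts.

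First I would set up the transversal geometry. On a Sasakian manifold the Reeb foliation carries a transversal Kähler structure whose local leaf spaces are honest Kähler manifolds of complex dimension $n$, with transversal Kähler form $\half d\eta$, transversal Levi-Civita connection $\nabla^{T}$, and transversal curvature tensor satisfying the usual Kähler symmetries. The basic Chern forms $c_i^{B}(S)$ are represented by the Chern-Weil forms of $\nabla^{T}$, and all the forms occurring in the statement are basic. In this language the hypothesis $Ric^{T}=\tau(\half d\eta)$ says precisely that on each foliation chart the transversal metric is Kähler-Einstein, which is exactly the input Theorem \ref{pointchernnoieq} requires.

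Next, because Theorem \ref{pointchernnoieq} is a pointwise identity whose proof uses only the Kähler symmetries of the curvature tensor together with the Einstein relation to express the left-hand side as a non-negative ``sum of squares'' of traceless curvature components, it transposes verbatim to the transversal setting. Thus on every foliation chart I obtain the basic identity
\begin{equation*}
\Big[c_2^{B}(S)-\frac{n}{2(n+1)}\big(c_1^{B}(S)\big)^2\Big]\wedge\Big(\half d\eta\Big)^{n-2}=\delta\,\Big(\half d\eta\Big)^{n},
\end{equation*}
where $\delta$ is a globally defined basic function with $\delta\geq 0$ everywhere, and $\delta$ vanishes at a point exactly when the transversal holomorphic bisectional curvature is constant there. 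Here I would check that the traceless curvature terms produced in the computation involve only the transversal curvature, so that both the sign of $\delta$ and its vanishing locus are governed by the transversal geometry alone. Finally I would wedge this identity with $\eta$ and integrate over the compact manifold $S$; since $\vol$ is a positive volume form,
\begin{equation*}
\int_S \Big[c_2^{B}(S)-\frac{n}{2(n+1)}\big(c_1^{B}(S)\big)^2\Big]\wedge\Big(\half d\eta\Big)^{n-2}\wedge\eta=\int_S \delta\,\vol\geq 0,
\end{equation*}
which is the desired inequality. If equality holds then $\int_S\delta\,\vol=0$ with $\delta\geq 0$ continuous, forcing $\delta\equiv 0$, so the Einstein metric has constant transversal holomorphic bisectional curvature.

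The step I expect to be the main obstacle is the rigorous transversal Chern-Weil input: one must verify that the basic Chern forms are correctly computed from $\nabla^{T}$ so that the local identities glue to a single global basic identity, and that the positivity and the equality characterization in Yau's original argument depend only on the transversal curvature, with the components along the Reeb direction playing no role. Once the basic cohomology and foliation-chart bookkeeping are in place, wedging with $\eta$ and integrating is immediate.
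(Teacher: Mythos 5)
Your proposal is correct and follows essentially the same route as the paper: the authors likewise obtain the lemma by transposing the pointwise K\"ahler--Einstein identity of Theorem \ref{pointchernnoieq} to the transversal geometry of the Reeb foliation (where $Ric^{T}=\tau(\half d\eta)$ is exactly the transversal Einstein condition) and integrating the resulting non-negative basic density against $(\half d\eta)^{n-2}\wedge\eta$. The paper in fact states the lemma with no further detail beyond this, so your write-up supplies precisely the foliation-chart and Chern--Weil bookkeeping the authors leave implicit.
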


\begin{remark}
The above integral is independent of basic deformations of Sasakian structures described in \cite{FOW09}.
\end{remark}

\vspace{0.5cm}
The existence of such a metric in the case $\tau>0$ is what we are interested in. First, we normalized the constant $\tau$ by $D$-homothetic transformation to get a new K\"ahler metric $g'$ with Einstein constant $2n+2$.\\

Given a Sasakian manifold $(S,g,J)$ and $\alpha \in \real_{>0}$, define $K'= \frac{1}{\alpha} K$ and $g'=\alpha g + \alpha(\alpha-1) \eta \otimes \eta$. The complex structure on $C(S)$, $J'$, is given by 
	$$\begin{array}{lll}
	J' (V) &=& K'\\
	J'(Y) &=& J(Y)\;\;\;for\;\;Y \in \Gamma(T_S)^{K^{\perp}}\\
	\end{array}$$

\vspace{0.5cm}
Then we have the new K\"ahler form $\omega' = \alpha \omega$ and the transversal K\"ahler form $\half d \eta ' = \alpha (\half d\eta)$.\\

Notice that $g'$ has constant transversal holomorphic bisectional curvature if and only if $g$ does (with different constants).\\

\vspace{0.5cm}
The existence of transversal K\"ahler-Einstein structures is proven in the toric case by Futaki-Ono-Wang in \cite{FOW09}.

\subsection{Toric Sasakian geometry and existence of transversal Einstein metrics}
For notations of toric Sasakian geometry, we refer readers to \cite{AB09}.

\begin{theorem}[Futaki-Ono-Wang \cite{FOW09}]
For a toric Sasakian manifold with $C^\lor$ being its moment cone, if $C^\lor$ is a $\mathbb{Q}$-Gorenstein cone, then there exists a unique $J_c$, which is a torus invariant complex structure, such that the corresponding cone manifold is Ricci flat (or equivalently, the corresponding Sasakian manifold is transversal K\"ahler-Einstein). 

\end{theorem}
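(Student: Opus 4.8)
The plan is to split the theorem into two nearly independent problems: first, selecting the correct torus-invariant transversal complex structure $J_c$, equivalently the correct Reeb field $\xi_c$ inside the Reeb cone $\mathrm{int}(C)$; and second, solving a transversal complex Monge-Amp\`ere equation once that structure has been fixed. The $\mathbb{Q}$-Gorenstein hypothesis is exactly what couples the two: the existence of $\lambda \in M$ and $l \in \inte_{>0}$ with $(\lambda, v_\rho) = -l$ for all $\rho \in C(1)$ guarantees that the cone $C(S)$ carries a nowhere-vanishing holomorphic $(n+1,0)$-form $\Omega$ on which the torus acts through a single character, so $C(S)$ is Calabi-Yau. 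This is the cohomological input that lets the basic first Chern class $c_1^B(S)$ be represented as a positive multiple of $[\tfrac{1}{2}d\eta]$, giving the transversal K\"ahler-Einstein equation with $\tau>0$ a chance to be solvable.

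For the first part I would parametrize the torus-invariant Sasakian structures by their Reeb fields $\xi \in \mathrm{int}(C)$, following the symplectic description of \cite{AB09}. The charge of $\Omega$ under the Reeb flow forces the linear normalization $(\lambda,\xi) = -(n+1)l$, cutting out an affine slice $I \subset \mathrm{int}(C)$, and on this slice I would study the volume functional $F(\xi) = Vol(\{y \in C^\lor \mid (\xi,y)\le 1\})$ as in the Martelli-Sparks-Yau variational principle \cite{MSY06}. The decisive elementary facts are that $F$ is strictly convex and proper on $I$, so it attains a unique minimum at some $\xi_c$, and that the Euler-Lagrange equation of this minimization is precisely the vanishing of the transversal Futaki invariant. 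Thus $\xi_c$ is forced, and the transversal obstruction to the Einstein condition is killed by construction.

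For the second part, with $\xi_c$ fixed and after a $D$-homothety normalizing $\tau=2n+2$, the problem reduces to finding a basic potential $\phi$ solving the transversal Monge-Amp\`ere equation
\begin{equation*}
\det\!\left( g^T_{i\bar{j}} + \partial_i\partial_{\bar{j}}\phi \right) = e^{h - \phi}\,\det\!\left( g^T_{i\bar{j}} \right),
\end{equation*}
where $h$ is the transversal Ricci potential. I would run the continuity method, deforming from a solvable reference equation to the target. In the toric setting the essential simplification is that, in action-angle coordinates on the dense orbit, the equation descends to a \emph{real} Monge-Amp\`ere equation for a convex function on the interior of the momentum polytope determined by $\xi_c$, so the analysis runs parallel to Wang-Zhu's treatment of toric Fano manifolds in \cite{WZ04}.

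The hard part will be the a priori estimates that keep the continuity path open, and the $C^0$ estimate is the genuine obstacle. In this positive, Fano-type regime there is no automatic control of the potential, and uniform bounds must be extracted from the combinatorics of the polytope; this is exactly where the volume-minimizing choice of $\xi_c$ pays off, since the vanishing Futaki invariant translates into the barycenter of the relevant polytope lying at the origin, which is the condition Wang-Zhu exploit to close the $C^0$ estimate. Once $C^0$ is secured, the second- and higher-order estimates follow from the Yau-Aubin machinery adapted to basic functions, yielding existence. For uniqueness, the Reeb field of any Sasaki-Einstein structure must minimize the volume functional and hence equals $\xi_c$; for this fixed Reeb field the transversal K\"ahler-Einstein metric, and therefore the induced complex structure $J_c$, is unique, which I would deduce from the convexity of the transversal Ding/K-energy functional along geodesics in the space of transversal K\"ahler potentials, just as in the toric Fano case.
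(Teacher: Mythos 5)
This theorem is not proved in the paper at all: the appendix quotes it verbatim from Futaki--Ono--Wang \cite{FOW09}, alongside the Martelli--Sparks--Yau volume-minimization theorem \cite{MSY06}, as external input to the main lattice-counting argument. So there is no internal proof to compare against; what I can tell you is that your sketch is a faithful reconstruction of the actual proof in the literature, and its architecture is exactly that of \cite{FOW09}: pin down the Reeb field by minimizing the volume functional on the affine slice $(\lambda,\xi)=-(n+1)l$ following \cite{MSY06}, then solve the transversal Monge--Amp\`ere equation by the continuity method, with the crucial $C^0$ estimate run in action--angle coordinates on the open orbit in parallel with Wang--Zhu's toric Fano argument \cite{WZ04}.

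A few points deserve care if you were to execute the sketch. First, for $l>1$ the $\mathbb{Q}$-Gorenstein condition only trivializes the $l$-th power of the canonical bundle of $C(S)$, so your nowhere-vanishing $(n+1,0)$-form $\Omega$ exists only as an $l$-th root (equivalently, with values in a flat line bundle); \cite{FOW09} formulate the hypothesis instead as $c_1^B=\tau\,[\frac{1}{2}d\eta]$ for some $\tau>0$ together with the vanishing of the first Chern class of the contact distribution, which is the form in which the cohomological input actually enters the Monge--Amp\`ere setup. Second, criticality of the volume functional kills only the restriction of the transverse Futaki invariant to torus directions, which is weaker than full vanishing of the obstruction; but this restricted vanishing is precisely the barycenter condition that the Wang--Zhu $C^0$ estimate consumes, so your plan is consistent provided you do not claim more than that. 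Third, when $\xi_c$ is irrational the structure is irregular and no quotient orbifold exists, so the analysis must genuinely be transversal, on basic functions on $S$; you acknowledge this, and the toric reduction to a real Monge--Amp\`ere equation survives it. Finally, the uniqueness assertion as stated does not come from \cite{FOW09} alone: it was established by Cho, Futaki and Ono (Comm.\ Math.\ Phys.\ 2008), with the Reeb field determined by strict convexity of the volume functional exactly as you argue, and the transverse K\"ahler--Einstein metric for the fixed Reeb field shown unique modulo automorphisms by convexity along explicit toric geodesics --- so your Ding/K-energy route is the correct one, though it should be attributed to that follow-up work rather than folded into the existence proof.
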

\vspace{0.5cm}
The Reeb vector of such a Ricci flat metric is characterized by a volume minimization. Given a $\mathbb{Q}$-Gorenstein cone $C^\lor$, we let $F: int(C) \rightarrow \real$ defined by $F(\xi) = Vol_{M_\real}(\{y\in C^\lor| (\xi,y) \leq 1\})$, and $I= \{x\in C |(x,\lambda)=-(n+1)l\}$. Then there are the following two theorems:

\vspace{0.5cm}
\begin{theorem}[Martelli-Sparks-Yau \cite{MSY06}]
$F|_I$ is strictly convex with a unique minimum point $\xi_c$.
\label{thmxic}
\end{theorem}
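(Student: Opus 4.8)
The plan is to realize $F$ as (a constant multiple of) a Laplace-type integral over the cone, from which strict convexity is transparent, and then to handle existence of the minimizer separately by a properness argument on the slice $I$.

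\textbf{Integral representation.} First I would record that $F$ is homogeneous of degree $-(n+1)$: since $C^\lor$ is a cone, $\{y \in C^\lor \mid (\xi,y) \le s\} = s\,\Delta_\xi$, so $Vol_{M_\real}(\{(\xi,y)\le s\}) = s^{n+1} F(\xi)$. Pushing Lebesgue measure forward along $y \mapsto (\xi,y)$ and comparing with a one-dimensional Gamma integral then yields
\[
\int_{C^\lor} e^{-(\xi,y)}\,dy = \int_0^\infty e^{-s}\,(n+1)\,s^n F(\xi)\,ds = (n+1)!\,F(\xi),
\]
so that $F(\xi) = \frac{1}{(n+1)!}\int_{C^\lor} e^{-(\xi,y)}\,dy$. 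For $\xi \in int(C)$ the linear form $(\xi,\cdot)$ is strictly positive on $C^\lor \setminus \{0\}$, so the integral converges and is smooth in $\xi$.

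\textbf{Strict convexity.} Differentiating twice under the integral sign, the Hessian of $F$ at $\xi$, evaluated on a direction $v \in N_\real$, is $\frac{1}{(n+1)!}\int_{C^\lor}(v,y)^2\,e^{-(\xi,y)}\,dy$. This is nonnegative, and it vanishes only if $(v,y)=0$ for almost every $y \in C^\lor$; since $C^\lor$ is top-dimensional, the hyperplane $\{(v,\cdot)=0\}$ meets $C^\lor$ in a set of measure zero unless $v=0$. Hence the Hessian is positive definite and $F$ is strictly convex on $int(C)$. Because the restriction of a strictly convex function to an affine subspace (intersected with its domain) is again strictly convex, $F|_I$ is strictly convex, and so it admits at most one minimizer. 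For existence I would use the $\mathbb{Q}$-Gorenstein hypothesis: writing $\xi=\sum_\rho a_\rho v_\rho \in C$ gives $(\xi,\lambda)=-l\sum_\rho a_\rho$, so that $I = \{\sum_\rho a_\rho v_\rho : a_\rho \ge 0,\ \sum_\rho a_\rho = n+1\}$ is a compact polytope whose relative interior is exactly $I \cap int(C)$. On this bounded relatively open set $F|_I$ is finite and smooth, whereas as $\xi$ approaches the relative boundary $I\cap\partial C$ the functional $(\xi,\cdot)$ degenerates to one that is nonpositive along some nonzero ray of $C^\lor$, so $\Delta_\xi$ loses compactness and $F(\xi)\to\infty$. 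Thus $F|_I$ is proper on $I\cap int(C)$, attains an interior minimum, and this minimum is unique by strict convexity; I call it $\xi_c$.

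The delicate step is the boundary blow-up used for existence: one must argue carefully that approaching $\partial C$ forces $\Delta_\xi$ to become unbounded, equivalently that the sublevel sets of $F|_I$ stay in a compact subset of $I\cap int(C)$, so that the minimum is genuinely interior rather than escaping to $\partial C$. The strict-convexity half, by contrast, is essentially automatic once the Laplace-integral representation of $F$ is in hand.
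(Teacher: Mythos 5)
Your proposal is correct, and it is essentially the argument of the cited source: the paper itself does not prove this theorem but quotes it from \cite{MSY06}, where the volume functional is likewise identified with a Laplace-type integral over the dual cone, $F(\xi)=\frac{1}{(n+1)!}\int_{C^\lor}e^{-(\xi,y)}\,dy$, strict convexity is read off from the positive-definite Hessian $\frac{1}{(n+1)!}\int_{C^\lor}(v,y)^2e^{-(\xi,y)}\,dy$, and existence of the minimizer comes from properness on the compact slice $I$. The only step you leave genuinely open is the one you flag yourself: unboundedness of $\Delta_{\xi_0}$ for $\xi_0\in I\cap\partial C$ does not by itself force $F(\xi)\to\infty$ as $\xi\to\xi_0$ (an unbounded set can have finite volume, and one also needs a lower-semicontinuity statement to pass the blow-up from $\xi_0$ to nearby $\xi$). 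Both halves close easily. First, since $\xi_0\in\partial C$ there is $y_0\in C^\lor\setminus\{0\}$ with $(\xi_0,y_0)=0$, and since $C^\lor$ is top dimensional, $\Delta_{\xi_0}$ contains a full-dimensional ball as well as the ray $\real_{\geq 0}y_0$; by convexity it then contains infinitely many disjoint balls of a fixed radius, so $Vol(\Delta_{\xi_0})=\infty$. Second, for any $R>0$ and any $\xi$ with $|\xi-\xi_0|\leq 1/R$, every $y\in\Delta_{\xi_0}\cap B(0,R)$ satisfies $(\xi,y)\leq 1+|\xi-\xi_0|\,R\leq 2$, hence $\Delta_{\xi_0}\cap B(0,R)\subset 2\Delta_\xi$ and $F(\xi)\geq 2^{-(n+1)}Vol(\Delta_{\xi_0}\cap B(0,R))$, which tends to infinity as $R\to\infty$. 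With these two observations inserted, your properness argument, and hence the whole proof, is complete and agrees with the original one.
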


\vspace{0.5cm}
\begin{theorem}[Futaki-Ono-Wang \cite{FOW09}]
The Sasaki complex structure $J_c$, which admits transversal K\"ahler-Einstein Sasakian metric, has its Reeb vector $K_c$ equal to the vector field generated by $\xi_c$ via the torus action.
\end{theorem}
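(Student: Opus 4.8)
The plan is to pin down the Reeb field of the Sasaki-Einstein structure $J_c$ variationally, as the unique critical point of the volume functional $F$ restricted to the slice $I$. Concretely, I would (i) show that the Ricci-flat normalization forces the Reeb field to lie in $I$; (ii) show that being transversal K\"ahler-Einstein forces it to be a critical point of $F|_I$; and (iii) invoke the strict convexity from Theorem \ref{thmxic} to identify that critical point with the minimizer $\xi_c$. Throughout, the toric cone $Y = C(S)$ is held fixed as a complex variety, so that a choice of Reeb field amounts to a choice of $\xi \in int(C)$.

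For step (i), let $\xi_*^{\#}$ be the Reeb field of the metric furnished by the Futaki-Ono-Wang existence theorem stated above, so that $C(S)$ is Ricci-flat. The $\mathbb{Q}$-Gorenstein hypothesis (Definition \ref{defCYcone}) makes the basic first Chern class a multiple of the transverse K\"ahler class, $c_1^B(S) = \frac{q}{2\pi}[d\eta]$ with $q = -(\lambda,\xi_*)/l$ as in Definition \ref{defxi}; on the other hand, Ricci-flatness of the cone is equivalent to $Ric^T = (2n+2)\sympT$, giving $c_1^B(S) = \frac{n+1}{2\pi}[d\eta]$. Comparing the two forces $q = n+1$, i.e. $(\lambda,\xi_*) = -(n+1)l$, so $\xi_* \in int(C) \cap I$.

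For step (ii), I would reduce the Einstein-Hilbert functional to the combinatorial volume. Following Martelli-Sparks-Yau, the Sasakian volume $\int_S \vol$, viewed as a function of the Reeb field $\xi$ alone, equals by Duistermaat-Heckman localization over the moment cone a positive multiple of $F(\xi) = Vol_{M_\real}(\Delta_\xi)$. Computing the derivative of $F$ along $I$, the first variation is a positive multiple of the transverse Futaki invariant of the corresponding transverse K\"ahler class; since this invariant is the obstruction to a transversal constant-scalar-curvature (in particular Einstein) metric, it vanishes for our $J_c$. Hence $\xi_*$ is a critical point of $F|_I$. I expect this first-variation identity to be the crux of the argument, as it requires tracking how the contact form $\eta$ and the transverse symplectic form $\sympT$ vary as the Reeb field is deformed within $I$.

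For step (iii), Theorem \ref{thmxic} asserts that $F|_I$ is strictly convex with a unique minimum $\xi_c$; a strictly convex function on a convex set has a single critical point, namely its minimum. Therefore the critical point $\xi_*$ produced in step (ii) must coincide with $\xi_c$, and the Reeb field of $J_c$ is $K_c = \xi_c^{\#}$, the vector field generated by $\xi_c$ via the torus action, as claimed.
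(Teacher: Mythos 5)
The paper gives no proof of this statement---it is imported verbatim from \cite{FOW09}---and your variational argument is precisely the proof in the cited literature: the Ricci-flat normalization ($\tau=2n+2$, forcing $q=n+1$) places the Reeb field in the slice $I$, the Martelli--Sparks--Yau first-variation identity shows a transversal K\"ahler-Einstein structure (whose transverse Futaki invariant vanishes) gives a critical point of $F|_I$, and the strict convexity of Theorem \ref{thmxic} pins that critical point to the unique minimizer $\xi_c$. Your deferral of the crux first-variation computation to \cite{MSY06} matches the level of rigor at which the paper itself invokes these references, so the proposal is correct and takes essentially the same route as the source the paper cites.
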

\vspace{0.5cm}
For a toric $(2n+1)$-dimensional Sasakian manifold $(S, g,J)$, by taking a $D$-homothetic transformation with constant $\alpha$, we have the new moment map and Reeb vector given by $\mu' = \alpha \mu$ and $K' = \frac{1}{\alpha} K$, respectively.

\vspace{0.5cm}
Hence for each $\xi \in C^o$ parallel to $\xi_c$, there is a unique $J$ having its Reeb vector field generated by $\xi$, which is transversal K\"ahler-Einstein.

\vspace{0.5cm}
As a consequence, for any such $J$, we have 
\begin{equation*}
\int_S \lbrack c_2^B-\frac{n}{2(n+1)}(c_1^B)^2\rbrack \wedge (\half d\eta)^{n-2}\wedge\eta\geq 0
\end{equation*}
and equality holds if and only if $J$ has constant transversal holomorphic bisectional curvature.\\

% !TEX root = Index page.tex
\section{Acknowledgements}
The authors thank Akito Futaki for useful discussions. The work described in this paper was substantially supported by a grant from the Research Grants Council of the Hong Kong Special Administrative Region, China (Project No. CUHK403709).

\bibliographystyle{amsplain}
\bibliography{Xbib}

\end{document}